\DeclareMathOperator\arctanh{arctanh}
\newtheorem{theorem}{Theorem}
\newtheorem{definition}[theorem]{Definition}
\newtheorem{example}[theorem]{Example}
\newtheorem{proposition}[theorem]{Proposition}
\newcommand{\ch}{\cosh}
\newcommand{\sh}{\sinh}
\begin{document}
	\subjclass[2020]{35A25, 35C05, 35M10}
	\keywords{Elliptic PDEs, Hyperbolic PDEs, sinh-Gordon equation, sine-Gordon equation, functional separation}
	\author{G. Polychrou}
	\title[Functional separation solutions of the Sh-Gordon eqs] 
        {Functional separation solutions of the Sinh-Gordon type equations}
	%the classification is valid for 2020
	
	\begin{abstract}
	In this paper, we address the following question: Which hyperbolic or elliptic 
        PDEs admit functional separable solutions. We shall focus on the study of a sinh-Gordon type equation. We construct solutions to this equation via the method of functional separation. We prove that these are the only families that have the property of functional separation and so we obtain a classification. To this end, we construct new families of solutions for the hyperbolic and elliptic versions of both sine and sinh Gordon equations in a unified way. 
	\end{abstract}
	\maketitle
	
	\section{Introduction and Statement of the Results}\label{Introduction}
    Consider the following PDE:
	\begin{equation}\label{Q}
		w_{xx} - \delta^2 w_{yy} = Q(w) , \quad \delta = \{1,i\}
	\end{equation}
	where $Q(w)$ is a smooth function. 
    For $\delta = 1$, we have a hyperbolic PDE while for $\delta = i$, an elliptic one.
   \par
    We define the functional separation property of solutions as follows: 
	\begin{definition}
		A solution $w(x,y)$ is functional separation if there exists an invertible, smooth function $f$ such that
		\[
		f\left(w(x,y)\right) = A(x) + B(y)
		\]  
		or equivalent the PDE is compatible with
		\[
		w_{xy} = F(w)w_xw_y,
		\]
		where $F(w) = -f''/f'$.
	\end{definition}
     It is natural to ask for which $Q$ we obtain functional separation solutions. This question was answered in \cite{GI,QHD}, for the case of the Dalamber (or wave) operator. For the sake of completeness, we shall present the proof.
	
    Moreover, in this paper, we study a sinh-Gordon type equation of the form
	\begin{equation}\label{GShGordon}
		\square_\delta w = \frac
		{2a}{\kappa}
		\sinh\left(2\kappa w\right),
	\end{equation}
	where $\square_\delta = \frac{\partial^2}{\partial x^2} - \delta^2 \frac{\partial^2}{\partial y^2}$, with $\delta,\kappa \in \{1,i\}$ and $a \in \mathbb{R}$. 
	The $\square_\delta$ operator is either the wave operator for $\delta = 1$ or the Laplace operator for $\delta = i$.
	
	Equation (\ref{GShGordon}), for a suitable choice of $\delta$ and $\kappa$, turns out to be the hyperbolic or elliptic version of the sinh-Gordon equation or sine-Gordon equation. More precisely, the following table presents the equations we have for every choice of $\delta$ and $\kappa$:
	\begin{center}
		\begin{tabular}{ |c|c|c| } 
			\hline
			$\delta$ & $\kappa$ & Equation \\
			\hline
			1 & 1 &  $w_{xx} - w_{yy} = 2a\sh(2w)$\\ 
			\hline
			1 & i & $w_{xx} - w_{yy} = 2a\sin(2w)$ \\ 
			\hline
			i & 1 & $w_{xx} + w_{yy} = 2a\sinh(2w)$ \\ 
			\hline
			i & i & $w_{xx} + w_{yy} = 2a\sin(2w)$ \\
			\hline
		\end{tabular}
	\end{center}
	By studying equation (\ref{GShGordon}), one studies in a unified way all the equations of the table. In this work, we construct solutions of equation (\ref{GShGordon}). There is no difference between the hyperbolic and elliptic versions and the sine-Gordon and sinh-Gordon, and we study them in a unified way.
	
	These equations are important and appear in numerous applications, for example, \cite{AO-YAN, E.R., FokPell, Hu-Sun, Hwang, McKean, Wei, Rubinstein, YangZhong, Wazwaz}. Research interest in this type of equation increased scientifically since the introduction of the soliton theory. For more details, we refer to \cite{AblowitzSegur,Hirota1, Hirota2,  Miura, RogersSchief, WazwazBook} and the references therein. The elliptic sinh-Gordon equation plays a vital role in the theory of constant mean curvature surfaces \cite{Abresch, Bour, Joaquin, K, SterlingWente, Wente} and in the theory of harmonic maps \cite{FordyWood, FotDask1, FotDask2, Minsky, Pol, PPFD2021, Wolf}. In the recent work of Fotiadis and Daskaloyannis, \cite{FotDask2}, equation (\ref{GShGordon}) plays a vital role in the construction of harmonic diffeomorphisms between pseudo-Riemannian surfaces. 
	
	Moreover, sine-Gordon equations have several applications in Physics and Engineering. For instance, in \cite{Hirota2, YangZhong} and the references therein, the motion of the magnetic flux on a one-dimensional Josephson junction transmission line is described by the hyperbolic sine-Gordon. Other applications of the sine-Gordon include the distributed mechanical analogue transmission line, the motion of a slide dislocation in a crystalline structure, models of elementary particles, the transmission of ferromagnetic waves, the epitaxial growth of thin films, the DNA-soliton dynamics. Finally, the flux of fluid across a closed curve is quantized in the case of the elliptic sine-Gordon equation, \cite{Kaptsov}.
	
    \par In Section 2, we prove the following Proposition.
    \begin{proposition}\label{Q forms}
		The equation (\ref{Q}) admits functionally separation solution if $Q(w)$ has the following form
		\begin{enumerate}
			\item $Q(w) = c_1 w + c_2,$
			\item $Q(w) = c_1e^{2w} + c_2e^{-w},$
			\item $Q(w) = c_1 e^{2wu} + c_2 e^{-cw},$
			\item $Q(w) = c_1 w + c_2 w\log(w),$
			\item $Q(w) = c_1\sh(2w) + c_2
			\left(
			2\cosh(w) + \arctan(\sh(w))
			\right),$
			\item $Q(w) = c_1\sin(2w) + \frac{c_2}{4}
			\left(
			\sin(2w)\arctanh(\sin(w)) - 2\cos(w)
			\right),$
			\item  $Q(w) =  c_1\sin(2w) + c_2
			\left(
			2\sin(w) + \sin(2w)\log\left(\tan(\frac{w}{2})\right)
			\right),$
			\item $Q(w) = c_1\sh(2w) + c_2
			\left(
			2\sh(w) + \sh(2w)\log\left(\tanh(\frac{w}{2})\right)
			\right).$
		\end{enumerate} 
	\end{proposition}
    
	\par Next, we study (\ref{GShGordon}). In Section 3, we prove the following main results. Recall that $\kappa = \{1,i\}$.
	\begin{proposition}\label{F(w)}
		Let $w(x,y)$ be a solution to (\ref{GShGordon}) and $w$ is 
		compatible with 
		\begin{equation}\label{compatible}
			w_{xy} = F(w)w_xw_y.
		\end{equation}
		Then $F(w)$ is of the form
		\begin{equation}\label{1st Fam of GSh-G}
			F(w) = \kappa \tanh\left(\kappa w\right), 
		\end{equation}
		or of the form
		\begin{equation}\label{2nd Fam of GSh-G}
			F(w) = \kappa \coth\left(\kappa w\right).
		\end{equation}
	\end{proposition}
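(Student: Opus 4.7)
The plan is to exploit the compatibility $w_{xy} = F(w) w_x w_y$ to bring $w$ into a separated form, substitute into (\ref{GSh-Gordon}), and from the differentiated identities extract an ODE for $F$. Since $(\log w_x)_y = F(w) w_y$ integrates in $y$, I get $w_x = \psi(w)\alpha(x)$ and, symmetrically, $w_y = \psi(w)\beta(y)$, where $\psi(w) = \exp\int F(w)\,dw$. Hence
\begin{equation*}
w_{xx} = F(w) w_x^2 + p(x)\,w_x, \qquad w_{yy} = F(w) w_y^2 + q(y)\,w_y,
\end{equation*}
with $p = \alpha'/\alpha$ and $q = \beta'/\beta$. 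Substituting into (\ref{GSh-Gordon}), and writing $k = \delta/\epsilon$, $G = 2(a^2 - \delta^2 b^2)$, I obtain the reduced identity
\begin{equation*}
F(w)(w_x^2 - \epsilon^2 w_y^2) + p(x) w_x - \epsilon^2 q(y) w_y = \frac{G}{k}\sinh(2kw).
\end{equation*}

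Next I differentiate this identity in $x$ and in $y$, using compatibility and the expressions above to eliminate every second derivative. Subtracting the two differentiated identities cancels the $(w_x^2 - \epsilon^2 w_y^2)$ term and yields the core relation
\begin{equation*}
2F(w)(p\,w_x + \epsilon^2 q\,w_y) + (p' + p^2) + \epsilon^2(q' + q^2) = 0.
\end{equation*}
Writing $w = g(W)$ with $W = A(x) + B(y)$ (so $\alpha = A'$, $\beta = B'$, $\psi = g'$) and setting $H(W) = g''/g'$, $R(x) = A'''/A'$, $S(y) = B'''/B'$, this is equivalent to the functional equation $2H(W)(A'' + \epsilon^2 B'') + R + \epsilon^2 S = 0$. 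Differentiating this further---dividing by $A'$ and by $B'$ respectively, then cross-differentiating in the remaining variable and substituting back---produces
\begin{equation*}
(R(x) - \epsilon^2 S(y))\,(H''(W) - 2H(W)H'(W)) = 0.
\end{equation*}

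In the generic branch $R \not\equiv \epsilon^2 S$ I conclude $H'' = 2HH'$, integrable to $H' = H^2 + c$ for a constant $c$. Translating via $H = F\psi$ and $\psi'' = (F' + F^2)\psi$, this collapses to the Riccati equation $F'(w) + F(w)^2 = c$, equivalently $\psi''(w) = c\,\psi(w)$. Its solutions split into the two families $F = \mu\tanh(\mu w + \phi)$ and $F = \mu\coth(\mu w + \phi)$ with $\mu^2 = c$.

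To pin down $\mu$ and $\phi$, I use the first integral $(\psi')^2 - c\psi^2 = \text{const}$, which gives $F'\psi^2 = \text{const}$, so $F'(w_x^2 - \epsilon^2 w_y^2)$ is $w$-independent. Adding (instead of subtracting) the two differentiated identities from the second paragraph and substituting the reduced identity back, the variables fully decouple and leave
\begin{equation*}
\frac{2G}{k} F(w)\sinh(2kw) - 2G \cosh(2kw) = \text{constant in } w.
\end{equation*}
The identities $\tanh(kw)\sinh(2kw) = \cosh(2kw) - 1$ and $\coth(kw)\sinh(2kw) = \cosh(2kw) + 1$ show this holds precisely for $F = k\tanh(kw)$ or $F = k\coth(kw)$; the hyperbolic angle-addition formulas imply that any $\phi \neq 0$ or $\mu \neq k$ would introduce an extra $w$-dependent $\tanh$ or $\coth$ term that cannot be cancelled. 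The degenerate branch $R \equiv \epsilon^2 S$ corresponds to traveling-wave solutions, for which direct substitution into (\ref{GSh-Gordon}) produces no additional compatible $F$. The main technical obstacle is the careful bookkeeping in the cross-differentiation step that isolates the Riccati, and subsequently extracting the $w$-only identity that fixes $\mu$ and $\phi$.
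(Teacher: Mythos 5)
Your argument is sound in its main line and reaches the same two structural equations as the paper, but by a genuinely different mechanism. The paper never integrates the compatibility condition: it differentiates both (\ref{GSh-Gordon}) and $w_{xy}=F(w)w_xw_y$ once, solves for all third derivatives, imposes $w_{xxxy}=w_{xxyx}$, and splits the result into the coefficient of $w_x^2-\epsilon^2 w_y^2$, which gives $F''+2F'F=0$, and a residual first-order constraint coupling $F$, $F'$, $F^2$ to $\tanh(2\delta w/\epsilon)$, which pins down the constants. You instead integrate the compatibility first (obtaining $w_x=\psi(w)\alpha(x)$, $w_y=\psi(w)\beta(y)$, hence $w_{xx}=Fw_x^2+pw_x$, etc.), reduce the PDE to a second-order identity, and cross-differentiate that; the subtraction recovers the same Riccati $F'+F^2=c$, while the sum yields a \emph{purely algebraic} pinning identity $F\sinh(2kw)=k\left(\cosh(2kw)+C\right)$, and the Riccati then forces $C=\pm1$, i.e.\ $F=k\tanh(kw)$ or $F=k\coth(kw)$. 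This is a real advantage: the second constraint comes out algebraic rather than differential, so the final elimination is cleaner than the paper's case analysis. Two slips you should repair, neither fatal. First, in the cross-differentiation step the separated factor multiplying $H''-2HH'$ is $A''+\epsilon^2B''$, not $R-\epsilon^2S$ (substituting the core relation $R+\epsilon^2S=-2H(A''+\epsilon^2B'')$ into $H''(A''+\epsilon^2B'')+H'(R+\epsilon^2S)=0$ makes this explicit); consequently the degenerate branch is ``$A$ and $B$ both quadratic'', not traveling waves, and your one-line dismissal of it does not apply as written --- though the paper is entirely silent on this genericity issue, so you are no worse off than its own proof. Second, after adding the two differentiated identities the right-hand side has the separated form $\phi_1(x)+\phi_2(y)$, and equating a function of $w=g(A(x)+B(y))$ to such a sum only forces it to be \emph{affine} in $A+B$, not constant; one more sentence is needed to exclude a nonzero slope, e.g.\ by checking it is incompatible with $F'+F^2=c$.
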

	\begin{theorem}\label{Family of solutions}
		The equation (\ref{GShGordon}) admits functional separation solutions 
		\[
		\sh\left(\kappa w(x,y)\right)
		= \tan\left(\kappa
		\left( A(x) + B(y) \right)\right),
		\] 
		and
		\[
		\tanh\left(\kappa \frac{w(x,y)}{2}\right) =
		\kappa
		e^{-2\left(A(x) + B(y)\right)}.
		\]
	\end{theorem}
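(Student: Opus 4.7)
The plan is to use Proposition \ref{F(w)} as the starting point: since any functional separation solution has $F(w) = -f''/f'$ equal to one of the two tabulated expressions, I would recover the separating function $f$ by two quadratures in each case, invert the relation $f(w) = A(x)+B(y)$ to produce an explicit candidate, and finally close the argument by substituting the candidate back into (\ref{GSh-Gordon}) to verify solvability and read off the auxiliary ODEs for $A$ and $B$.

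For the first family, $F(w)=\frac{\delta}{\epsilon}\tanh(\frac{\delta}{\epsilon}w)$ gives $(\ln f')' = -\frac{\delta}{\epsilon}\tanh(\frac{\delta}{\epsilon}w)$, so a single quadrature yields $f'(w) \propto 1/\ch(\frac{\delta}{\epsilon}w)$. Using the identity $\int \mathrm{sech}(u)\,du = \arctan(\sh u)$, a second quadrature produces $f(w) = \frac{\epsilon}{\delta}\arctan\bigl(\sh(\frac{\delta}{\epsilon}w)\bigr)$ up to an affine rescaling that I absorb into $A$ and $B$; inverting $f(w)=A(x)+B(y)$ yields the first formula. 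Analogously, $F(w)=\frac{\delta}{\epsilon}\coth(\frac{\delta}{\epsilon}w)$ leads to $f'(w) \propto 1/\sh(\frac{\delta}{\epsilon}w)$, and $\int \mathrm{csch}(u)\,du = \ln|\tanh(u/2)|$ gives $f(w) = \frac{\epsilon}{\delta}\ln|\tanh(\frac{\delta}{2\epsilon}w)|$; exponentiation, with the multiplicative constant pulled out as the prefactor $\frac{\delta}{\epsilon}$, produces the second formula.

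To verify that the candidates actually solve (\ref{GSh-Gordon}), I would compute $w_x$, $w_y$, $w_{xx}$, $w_{yy}$ from the explicit expressions. For the first family, implicit differentiation of $\sh(\frac{\delta}{\epsilon}w) = \tan(\frac{\delta}{\epsilon}(A+B))$ (using $1+\tan^2 = \ch^2$) immediately gives $w_x = \ch(\frac{\delta}{\epsilon}w)A'(x)$ and $w_y = \ch(\frac{\delta}{\epsilon}w)B'(y)$. Substituting into (\ref{GSh-Gordon}) and dividing by $\ch(\frac{\delta}{\epsilon}w)$ reduces the PDE to the identity
\[
\tfrac{\delta}{\epsilon}\bigl(A''(x) - \epsilon^2 B''(y)\bigr) = \Bigl[4(a^2-\delta^2b^2) - \tfrac{\delta^2}{\epsilon^2}\bigl(A'(x)^2 - \epsilon^2 B'(y)^2\bigr)\Bigr]\sh\bigl(\tfrac{\delta}{\epsilon}w\bigr),
\]
with the factor $\sh(\frac{\delta}{\epsilon}w)$ being a function of $A(x)+B(y)$. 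The analogous manipulation in the second case produces a structurally similar identity with an exponential factor in place of $\sh$. Separating each identity into its $x$- and $y$-dependent parts then yields decoupled first-order ODEs of the form $A'(x)^2=\rho(A)$, $B'(y)^2=\sigma(B)$, quadratures which are elementary or elliptic, and whose solvability delivers the claimed family.

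The main obstacle will be this final separation step: the coefficient $(A'^2-\epsilon^2B'^2)$ and the transcendental factor depending on $A+B$ are coupled, so the identity does not separate by inspection. I expect to resolve it by differentiating once with respect to $y$ (or $x$) so as to eliminate the common factor via the product rule, reducing the identity to a polynomial relation in $A'(x), B'(y)$ and their derivatives; the standard argument on polynomial identities in two independent variables then yields the decoupled ODEs for $A$ and $B$, from which the candidate $w$ is a bona fide solution of (\ref{GSh-Gordon}).
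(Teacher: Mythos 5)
Your derivation is essentially the paper's own proof: both start from Proposition \ref{F(w)}, integrate $F=-f''/f'$ twice to get $f'\propto 1/\ch\left(\frac{\delta}{\epsilon}w\right)$ or $f'\propto 1/\sh\left(\frac{\delta}{\epsilon}w\right)$, and invert $f(w)=A(x)+B(y)$ to obtain the two stated forms, with the integration constants absorbed into $A$, $B$ and the prefactor. The back-substitution and separation argument you append is not part of the paper's proof of this theorem; that verification is carried out separately as Theorems \ref{1stFam} and \ref{2ndFam} in Section 3, and your sketch of it (differentiating to eliminate the common transcendental factor and then separating variables) matches what is done there.
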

	
	The Theorem above demonstrates the classification of solutions of functional separation for equation (\ref{GShGordon}). Since we have not selected the values of $\delta$ and $\kappa$, there is no distinction between the elliptic and hyperbolic forms.
	
	Note that our results extend the results in \cite{Pol, PPFD2021}, since in these papers, the authors construct solutions for the elliptic sine-Gordon and elliptic sinh-Gordon equation. In this work, we study it in a unified way.
	
	\par In Section 4, we consider the solutions constructed in Theorem \ref{Family of solutions}, to obtain the form of the functions $A(x)$ and $B(y)$. It turns out that these functions are Jacobi elliptic.
	
	At first, we study the solutions of the form
	\begin{equation}\label{1stFamily}
		\tanh
		\left(
		\kappa \frac{w(x,y)}{2}
		\right)
		= \kappa
		e^{-2\left(A(x) + B(y)\right)} 
		= \kappa F(x)G(y),
	\end{equation}
	and we state the following result.
	\begin{theorem}\label{1stFam}
		If $w$ is a solution of the  sinh-Gordon type equation 
		(\ref{GShGordon})
		of the form
		\[
		\tanh
		\left(
		\kappa\frac{w(x,y)}{2}
		\right)
		= \kappa F(x)G(y),
		\]
		then the functions $F(x)$ and $G(y)$ satisfy the differential equations
		\begin{equation}\label{F}
			F'(x)^{2} = A\kappa^2\delta^2F(x)^{4} + B
			F(x)^{2} + C,
		\end{equation}
		\begin{equation}\label{G}
			G'(y)^{2} = 
			\kappa\delta^2 CG(y)^{4} + 
			\delta^2
			\left(
			B - 4a
			\right)
			G(y)^{2} + A,
		\end{equation} 
		where $A,B$ and $C \in \mathbb{R}$.
	\end{theorem} 
	
	Functions $F$ and $G$ are in general Jacobi elliptic functions, but there are constants for which the solutions are elementary functions.
	
	The second family of solutions $w$ of the generalised sinh-Gordon equation is of the form
	\begin{equation}\label{2ndFamily}
		\sh\left(
		\kappa w(x,y)
		\right)
		= \tan
		\left(
		\kappa
		(A(x) + B(y))
		\right),
	\end{equation}
	and we obtain the following result.
	\begin{theorem}\label{2ndFam}
		If $w$ is a solution of the  sinh-Gordon type equation 
		(\ref{GShGordon})
		of the form 
		\[
		\sh\left(
		\kappa w(x,y)
		\right)
		= \tan
		\left(
		\kappa
		(A(x) + B(y))
		\right),
		\]
		then the functions $A(x)$ and $B(y)$ satisfy the differential equations
		\begin{equation}\label{A}
			A''(x)^2 = - \kappa^2A'(x)^4 + c_1 A'(x)^2 + c_2
		\end{equation}
		\begin{equation}\label{B}
			B''(y)^{2} = - \kappa^2 B'(y)^{4} + \delta^2\left(c_1 - 8a\right)B'(y)^{2} + c_3,
		\end{equation} 
		where $16a^2 + \kappa^2\left(c_3 - c_2\right) = 4ac_1$ and $c_1,c_2, c_3 \in \mathbb{R}$.
	\end{theorem}
	\textbf{Remark:} By letting $\alpha(x) = A'(x)$ and $\beta(y) = B'(y)$, the equations (\ref{A}) and (\ref{B}) turn into
	\[
	\alpha'(x)^2 = -\kappa^2 \alpha^4(x) + c_1\alpha^2(x) + c_2,
	\]
	\[
	\beta'(y)^2 = - \kappa^2 \beta(y)^{4} + \delta^2\left(c_1 - 8a\right)\beta(y)^{2} + c_3,
	\]
	where $16a^2 + \kappa^2\left(c_3 - c_2\right) = 4ac_1$ and $c_1,c_2, c_3 \in \mathbb{R}$.
	
	Generally, the solutions of these equations are Jacobi elliptic functions, but there are constants $c_1,c_2$ and $c_3$, for which we obtain elementary functions.
	
    \section{Proof of Proposition \ref{Q forms}}

	\begin{proof}\label{Proof}
		The proof follows \cite{Arrigo, GI, QHD}. The equations $w_{xx} - \delta^2 w_{yy} = Q(w)$ and $w_{xy} = F(w)w_xw_y$ have to be compatible, to admit functional separation solutions. Differentiating both equations, once with respect to $x$ and once with respect to $y$, we obtain the following:
		\begin{align*}
			w_{xxx} &= \delta^2 \left(F'(w) + F^2(w)\right)w_xw_y^2 + 
			\delta^2F(w)w_{x}w_{yy} + Q'(w)w_x
			\\
			w_{yyy} & = \delta^2 \left(F'(w) + F^2(w)\right)w_x^2w_y
			+ \delta^2 F(w)w_yw_{xx} - \delta^2Q'(w)w_y
			\\
			w_{xyx} &= F'(w)w_x^2w_y + F(w)w_{xx}w_y + F^2(w)w_x^2w_y,
			\\
			w_{xyy} &= F'(w)w_xw_y^2 + F^2(w)w_xw_y^2 + F(w)w_xw_{yy}.
		\end{align*}
		Differentiating and using (\ref{Q}) to eliminate the second derivatives, we obtain
		\begin{align*}
			\frac{w_{xxyx}}{w_xw_y} &= 
			\left(3F'(w) + 3F^2(w)\right)Q(w) + F(w)Q'(w)
			\\
			&+ \left(F''(w) + 3F(w)F'(w) + F^3(w)\right)w_x^2
			\\
			&+ \delta^2 \left(F'(w)F(w) + F^3(w)\right)w_y^2
			\\
			&+ \delta^2\left(3F'(w) + 4F^2(w)\right)w_{yy},
		\end{align*}
		\begin{align*}
			\frac{w_{xxxy}}{w_xw_y} & = Q''(w) + F^2(w)Q(w)
			\\
			&+ \left(F'(w)F(w) + F^3(w)\right)w_x^2
			\\
			&+ \delta^2 \left(F''(w) + 3F(w)F'(w) + F^3(w)\right)w_y^2
			\\
			&+ \delta^2\left(3F'(w) + 4F^2(w)\right)w_{yy}.
		\end{align*}
		Using the compatibility of $w_{xxyx}$ and $w_{xxxy}$ and equate the terms, we obtain:
		\begin{align}\label{FandQ}
			3Q(w)F'(w) + 2 Q(w)F^2(w) + Q'(w)F(w) = Q''(w)
		\end{align}
            \begin{align}\label{EquationF}
                F''(w) + 2F'(w)F(w) = 0
            \end{align}
		Using (\ref{EquationF}), we obtain that $F$ is one of the following:
		\begin{enumerate}
			\item  $F(w) = c$, where $c$ is a real constant.
			\item  $F(w) = 1/w$
			\item  $F(w) = - \tan(w)$
			\item  $F(w) = \tanh(w)$
			\item  $F(w) = \cot(w)$
			\item  $F(w) = \coth(w)$
		\end{enumerate}
		We now find $Q$. From now on, $c_1$ and $c_2$ are real constants.
		\\
		\textbf{Case 1.} $F(w) = 0, Q''(w) = 0$ so 
		\[
		Q(w) = c_1w + c_2.
		\]
		\textbf{Case 2.} $F(w) = 1, Q''(w) - Q'(w) - 2Q(w) = 0$ so 
		\[
		Q(w) = c_1 e^{-w} + c_2 e^{2w}.
		\]
		\\
		\textbf{Case 3.} $F(w) = c, Q''(w) - cQ'(w) - 2c^2Q(w) = 0$ so 
		\[
		Q(w) = c_1e^{2cw} + c_2e^{-cw}.
		\]
		(For $c = -1$ \textbf{Bullough-Dodd-Mikhailov equation})
		\\
		(For $c = - 1$ and $c_1 = 0$ \textbf{Liouville})
		\\
		\textbf{Case 4.} $F(w) = \frac{1}{w}, w^2Q''(w) - wQ'(w) + Q(w) = 0$ so
		\[
		Q(w) = c_1w + c_2w\log(w).
		\]
		\textbf{Case 5.} $F(w) = \tanh(w)$, 
		\\
		$Q''(w) - \tanh(w)Q'(w) - 
		\left(\sec h^2(w) + 2\right)Q(w) = 0.$
		\\
		An obvious solution is $\sh(2w)$. So let $Q(w) = \sh(2w)R(w)$ the equation turns into 
		\[
		\frac{d}{dw}\left(\ch(w)\sh^2(w)R'(w)\right) = 0 ,
		\]
		\[
		R(w) = c_2\left(\frac{1}{\sh(w)} + \arctan(\sh(w))\right) + c_1,
		\]
		\[
		\Rightarrow Q(w) = c_1\sh(2w) + c_2
		\left(
		2\cosh(w) + \arctan(\sh(w))
		\right)
		\]
		\\
		\textbf{Case 6.} $F(w) = -\tan(w)$, 
		\\
		$Q''(w) + \tan(w)Q'(w) + 
		\left(\sec^2(w) + 2\right)Q(w) = 0,$
		\\
		An obvious solution is $\sin(2w)$. So let $Q(w) = \sin(2w)R(w)$ the equation turns into 
		\[
		\frac{d}{dw}
		\left(
		4\cos(w)\sin^2(w)R'(w) = 0,
		\right)
		\]
		\[
		R(w) = c_1 + \frac{c_2}{4}
		\left(\arctanh(\sin(w)) - \csc(w)\right),
		\]
		\[
		\Rightarrow Q(w) = c_1\sin(2w) + \frac{c_2}{4}
		\left(
		\sin(2w)\arctanh(\sin(w)) - 2\cos(w)
		\right)
		\]
		\textbf{Case 7.} $F(w) = \cot(w)$, 
		\\
		$Q''(w) - \cot(w)Q'(w) + \left(\csc^2(w) + 2\right)Q(w) = 0,$
		\\
		An obvious solution is $\sin(2w)$. So let $Q(w) = \sin(2w)R(w)$ the equation turns into
		\[
		\frac{d}{dw}
		\left(
		\cos^2(w)\sin(w)R'(w)
		\right)
		= 0,
		\]
		\[
		R(w) = c_1 + c_2
		\left(
		\sec(w) + \log\left(\tan(\frac{w}{2})\right),
		\right)
		\]
		\[
		\Rightarrow Q(w) = c_1\sin(2w) + c_2
		\left(
		2\sin(w) + \sin(2w)\log\left(\tan(\frac{w}{2})\right)
		\right)
		\]
		\textbf{Case 8.} $F(w) = \coth(w)$, 
		\\
		$Q''(w) - \coth(w)Q'(w) +
		\left(\csc h^2(w) - 2\right)Q(w) = 0$.
		\\
		An obvious solution is $\sh(2w)$. So let $Q(w) = \sh(2w)R(w)$ the equation turns into 
		\[
		\frac{d}{dw}\left(\ch^2(w)\sh(w)R'(w)\right) = 0 ,
		\] 
		\[
		R(w) = c_1 + c_2\left(\frac{1}{\ch(w)} + \log\left(\tanh(\frac{w}{2})\right)\right),
		\]
		\[
		\Rightarrow Q(w) = c_1\sh(2w) + c_2
		\left(
		2\sh(w) + \sh(2w)\log\left(\tanh(\frac{w}{2})\right).
		\right)
		\]
	\end{proof}

	\section{Solutions via functional separation}
	
	In this section, we demonstrate the proofs of Proposition \ref{F(w)} and of Theorem \ref{Family of solutions}. Through this Theorem, we classify the solutions with the property of functional separation of equation (\ref{GShGordon}). There is no distinction between the elliptic and hyperbolic versions of both the sine-Gordon and sinh-Gordon equations.
	
	Such solutions are important and many researchers have studied them. A remarkable example is due to Abresch.
	In \cite{Abresch}, the author describes the Wente Torus, introduced by Wente in \cite{Wente}. The author constructs families of solutions of the elliptic sinh-Gordon equation which satisfy a geometric condition. This condition turns out to be a nonlinear PDE of the form
	\[
	w_{xy} = \tanh(w)w_xw_y,
	\]
	or
	\[
	w_{xy} = \coth(w) w_xw_y.
	\]
	In this work, we prove that these conditions are related with the functional separation families of solutions. We remind that by Definition 1, a solution has the property of functional separation, if there exists a $f$ such that
	\[
	f\left(w(x,y)\right) = A(x) + B(y)
	\]
	or equivalent the equation (\ref{GShGordon}) is compatible with
	\[
	w_{xy} =  F(w) w_xw_y ,
	\]
	where $F(w) : \mathbb{R} \rightarrow \mathbb{R}$, $F = -f''/f'$. Our first goal is to prove Proposition \ref{F(w)}, to find explicitly the function $F$.
	\begin{proof}[Proof of Proposition (\ref{F(w)})]
		The equations (\ref{GSh-Gordon})
		and $w_{xy} = F(w)w_xw_y$ have to be compatible. Taking the derivatives with respect to $x$ and $y$  of both equations  we obtain
		\begin{align}
			w_{xxx} - \delta^2 w_{yyx} &= 
			4a w_x\ch\left(2\kappa w\right),
			\label{wxxx}\\
			w_{xxy} - \delta^2 w_{yyy} &=
			4a w_y\ch\left(2\kappa w\right),
			\label{wyyy}\\
			(w_{xy})_x &= F'(w)w_x^2w_y + F(w)w_{xx}w_y + F^2(w)w_x^2w_y,
			\label{wxyx} \\
			(w_{xy})_y &= F'(w)w_xw_y^2 + F^2(w)w_xw_y^2 + F(w)w_xw_{yy}
			\label{wxyy}.
		\end{align}
		Solving these equations for the third derivatives 
		$w_{xxx},w_{xxy},w_{xyy}$ and $w_{yyy}$ we obtain
		\begin{align*}
			w_{xxx} &= \delta^2 w_xw_y^2 
			\left(F'(w) + F^2(w)\right)
			+ \delta^2 F(w)w_xw_{yy} 
			+ 4a w_x \ch\left(2\kappa w\right),
			\\
			w_{xxy} &= F'(w)w_x^2w_y + F(w)w_{xx}w_y + F^2(w)w_x^2w_y,
			\\
			w_{xyy} &= F'(w)w_xw_y^2 + F^2(w)w_xw_y^2 + F(w)w_xw_{yy},
			\\
			w_{yyy} &= \delta^2w_x^2w_y
			\left(F'(w) + F^2(w)\right)
			+ \delta^2 F(w)w_{xx}w_y 
			- 4a\delta^2  w_y \ch\left(2\kappa w\right).
		\end{align*}
		By compatibility and using (\ref{GShGordon}) to eliminate the second derivatives, we obtain
		\begin{align*}
			\frac{w_{xxxy}}{w_xw_y} &=
			4aF(w)
			\ch\left(2\kappa w\right)
			+ \frac{6a}{\kappa}
			\left(F'(w) + F^2(w)\right)
			\sh\left(2\kappa w\right)
			\\
			&+ \left(
			F''(w) + 3F'(w)F(w) + F^3(w)
			\right)w_x^2\\
			&+ \left(\delta^2F(w)F'(w) + \delta^2F^3(w)\right)w_y^2\\
			&+ \left(
			3\delta^2F'(w) + 4\delta^2F^2(w)
			\right)w_{yy}
		\end{align*}
		and 
		\begin{align*}
			\frac{w_{xxyx}}{w_xw_y} &=
			8a \kappa
			\sh\left(2\kappa w\right)
			+ \frac{2a}{\kappa}F^2(w)\sh\left(2\kappa w\right)\\
			&+ \left(F(w)F'(w) + F^3(w)\right)w_x^2\\
			&+ \left(\delta^2F''(w) + 3\delta^2F'(w)F(w) + \delta^2F^3(w)\right)w_y^2\\
			&+ \left(3\delta^2F'(w) + 4\delta^2F^2(w)
			\right)w_{yy},
		\end{align*}
		
		Thus,
		\begin{align*}
			&4 F(w)\ch\left(2\kappa w\right) +
			\frac{4}{\kappa}F^2(w)
			\sh\left(2\kappa w\right) + \frac{6}{\kappa}F'(w)
			\sh\left(2\kappa w\right)
			- 8\kappa\sh\left(2\kappa w\right)
			\\
			& + \left(F''(w) + 2F'(w)F(w)\right)
			\left(w_x^2 - \delta^2w_y^2\right) = 0,
		\end{align*}
		from which we obtain the equations for $F(w)$
		\begin{equation}
			2\kappa F(w) \coth(2\kappa w)+
			3F'(w) + 
			2F^2(w)
			= 4\kappa^2,
			\label{1st equation of F}
		\end{equation}
		and
		\begin{align}
			F''(w) + 2F'(w)F(w) = 0.
			\label{2nd equation of F}
		\end{align}
		We begin with the equation 
		(\ref{2nd equation of F}) and so by integrating we obtain
		$F'(w) + F^2(w) = c$. We have to consider three cases for $c$. Namely $c = 0$, $c > 0$ and $c < 0$. By scaling, we consider $c = \{0,1,-1\}$. 
		The only solutions of (\ref{2nd equation of F}) compatible with (\ref{1st equation of F}) are
		$F(w) = \kappa \tanh (\kappa w)$ and
		$F(w) = \kappa \coth(\kappa w)$.
	\end{proof}
	Since we obtained the function $F$, we can construct the families of solutions via functional separation as in Theorem \ref{Family of solutions}. 
	\begin{proof}[Proof of Theorem \ref{Family of solutions}]
		By Proposition \ref{F(w)} we have that
		$F(w) = \kappa \tanh (\kappa w)$ or 
		$F(w) = \kappa \coth(\kappa w)$. To construct the families of solutions, we have to obtain the function $f$. Taking into account that $F = - f''/f'$ and considering the first case, we obtain
		\[
		-\frac{f''}{f'} = \kappa\tanh(\kappa w),
		\]
		and by integration we obtain
		\[
		f'(w) = 1/\cosh w ,
		\]
		which corresponds to (\ref{1st Fam of GSh-G}), while taking into account the second case
		\[
		-\frac{f''}{f'} = \kappa\coth(\kappa w),
		\]
		by integrations we obtain (\ref{2nd Fam of GSh-G}).
	\end{proof}
	
	\section{Solutions of the Generalised Sinh-Gordon equation}\label{Generalised Sinh-Gordon}
	In this section, we prove Theorems \ref{1stFam} and \ref{2ndFam}. These Theorems describe the families of solutions of (\ref{GShGordon}), which are classified by Theorem \ref{Family of solutions}. 
	
	The first family of solutions for (\ref{GShGordon}) are of the form
	\begin{equation}\label{FandG}
		\tanh
		\left(
		\kappa\frac{w(x,y)}{2}
		\right)
		= \kappa F(x)G(y).
	\end{equation}
	\begin{proof}[Proof of Theorem \ref{1stFam}]
		Let $H(y) = 1/G(y)$. By substituting in (\ref{GShGordon}), (\ref{FandG}) then we obtain
		\[
		\left(
		H^2(y) - \kappa^2 F^2(x)
		\right)
		\left(
		F''(x)H(y) + \delta^2 F(x)H''(y)
		\right)
		+
		2\kappa^2 F(x)F'(x)^2H(y)
		\]
		\[
		- 2\delta^2 F(x)H(y)H'(y)^2
		=
		4a
		\left(
		\kappa^2 F^3(x)H(y) + F(x)H^3(y)
		\right).
		\]
		By dividing with $F(x)H(y)$ we have
		\begin{equation}\label{FandH}
			\left(\frac{F''(x)}{F(x)} + \delta^2 \frac{H''(y)}{H(y)} \right)
			\left(
			H^2(y) - \kappa^2 F^2(x)
			\right)
			+ 2\kappa^2F'(x)^2 - 2\delta^2H'(y)^2 
		\end{equation}
		\[
		= 4a
		(\kappa^2F^2(x) + H^2(y)).
		\]
		By differentiating with respect to x and y, we obtain
		\begin{equation}\label{FandHconstant}
			\left(\frac{F''(x)}{F(x)}\right)_{x}
			\frac{1}{\kappa^2\delta^2F(x)F'(x)} = 
			\left(\frac{H''(y)}{H(y)}\right)_{y}
			\frac{1}{H(y)H'(y)} = 4A.
		\end{equation}
		So, we obtain
		\[
		\left(\frac{F''(x)}{F(x)}\right)_{x} = 4\kappa^2\delta^2AF(x)F'(x)
		\]
		\[
		\Rightarrow
		(F'(x))^{2} = A\kappa^2\delta^2
		(F(x))^{4} + B(F(x))^{2} + C,
		\]
		and for $H(y)$ we obtain
		\[
		\left(\frac{H''(y)}{H(y)}\right)_{y} = 4AH(y)H'(y)
		\]
		\[
		\Rightarrow 
		(H'(y))^2 = A(H(y))^4 + B'(H(y))^2 + C'.
		\]
		Using (\ref{FandH}) we obtain that $B - \delta^2 B' = 4a$ and $C = \kappa^2\delta^2C'$ and  equations (\ref{F}) and (\ref{G}) follow.
	\end{proof}
	By this Theorem, we obtain the exact form of solutions of the family (\ref{FandG}). In general, $F$ and $G$ are Jacobi elliptic functions. Through this Theorem, we construct new families of solutions for both the elliptic and hyperbolic versions, since there is no selection of $\delta$ and $\kappa$.
	Let us demonstrate an example of this family.
	\begin{figure}\caption{}
		\centering
		\includegraphics[scale = 0.6]{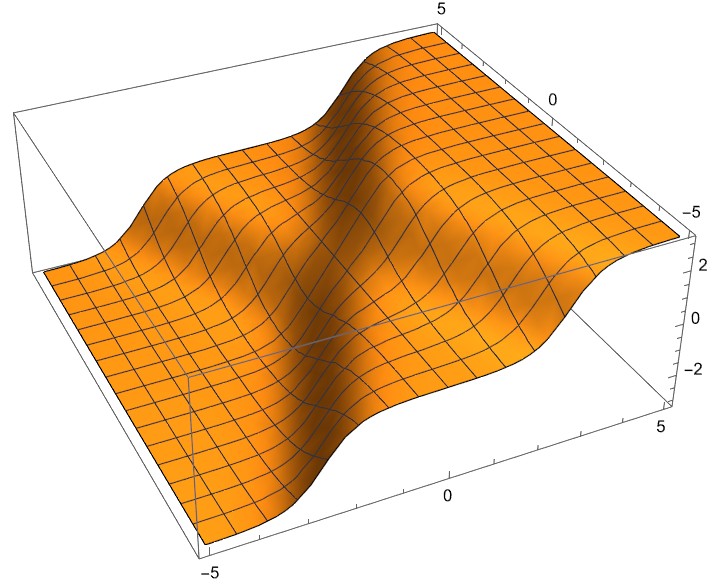}
	\end{figure}
	\begin{example}
		Let $a = 1$, $A = 0$, $B = 4 - \kappa^2$ and $C = 1$ then we obtain the equations
		\[
		F'(x)^2 = 
		\left(4 - \kappa^2\right)F^2(x) + 1,
		\quad
		H'(y)^2 = \kappa^2\delta^2
		\left(1 - H^2(y)\right).
		\]
		Then 
		\[
		F(x) = \frac{\sinh\left(\sqrt{4 - \kappa^2} x\right)}
		{\sqrt{4 - \kappa^2}},
		\quad
		H(y) = \cos\left(\kappa\delta y\right).
		\]
		So, for the equation (\ref{GShGordon}),
		we have constructed the solution
		\[
		\tanh\left(\frac{\kappa w(x,y)}{2}\right)
		= \kappa
		\frac{\sinh\left(\sqrt{4 - \kappa^2} x\right) \sec(\kappa\delta y)}
		{\sqrt{4 - \kappa^2}}.
		\]
		
		By selecting the constants $\delta$ and $\epsilon$, one obtains solutions for the hyperbolic or elliptic versions, depending on the choice made. For instance let  $\delta = 1$ and $\kappa = i$ then we obtain a solution of the hyperbolic sine-Gordon equation
		\[
		w(x,y) = 2\arctan
		\left(
		\frac{\sh(\sqrt{5}x)}{\sqrt{5}\ch(y)}
		\right).
		\]
		In Figure 1, we demonstrate its graph.
	
		Another example is to select $\delta = i$  and $\kappa = 1$ to admit a solution of the elliptic sinh-Gordon equation. To this end, we obtain the solution
		\[
		\tanh
		\left(
		\frac{w(x,y)}{2}
		\right)
		= \frac{\sinh(\sqrt{3}x)}{\sqrt{3}\ch(y)},
		\]
		which is well-defined in 
		$\Omega:= \{ (x,y) \in \mathbb{R}^2: 
		|\frac{\sinh(\sqrt{3}x)}{\sqrt{3}\ch(y)} | < 1\}$.
		In Figure 2, we demonstrate its graph.
		\begin{figure}\caption{}
			\centering
			\includegraphics[scale = 0.6]{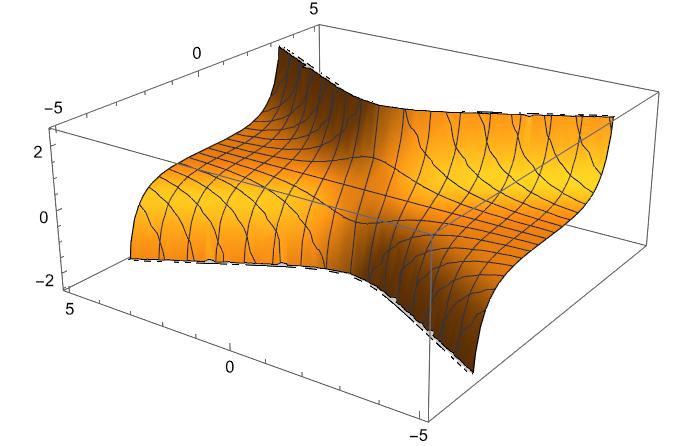}
		\end{figure}
		One can verify the results either by hand or using computing software, such as Mathematica.
	\end{example}
	The second family of solutions for (\ref{GShGordon}) are of the form
	\begin{equation}\label{AandB}
		\sh
		\left(
		\kappa w(x,y)
		\right)
		= \tan
		\left(
		\kappa(A(x) + B(y))
		\right)
		.
	\end{equation}
	\begin{proof}[Proof of Theorem \ref{2ndFam}]
		By substituting (\ref{AandB}) in (\ref{GShGordon}),  and dividing by \\ $\sec\left(\kappa(A(x) + B(y))\right)$ then we obtain
		\[
		\tan
		\left(
		\kappa(A(x) + B(y))
		\right)
		= \frac
		{
			\kappa
			\left(A''(x) - \delta^2 B''(y)\right)
		}
		{
			4a - \kappa^2A'(x)^2 + \delta^2\kappa^2 B'(y)^2 
		}.
		\]
		By differentiating with respect to x  and using the trigonometric identity $\sec^2u = 1 + \tan^2u$ and dividing with $A'(x)$, we obtain
		\begin{equation}\label{A''}
			\left(4a - \kappa^2A'(x)^2 + \delta^2\kappa^2 B'(y)^2\right)^2 
			+ \kappa^2 B''(y)^2
		\end{equation}
		\[
		=  \frac{A^{(3)}(x)}{A'(x)}
		\left(4a - \kappa^2A'(x)^2 + \delta^2\kappa^2 B'(y)^2\right)
		+ \kappa^2 A''(x)^2.
		\]
		By differentiating with respect to y we obtain
		\[
		4\delta^2\kappa^2A'(x)B'(y)B''(y)
		\left(4a - \kappa^2A'(x)^2 + \delta^2\kappa^2 B'(y)^2\right) 
		\]
		\[
		= 2\delta^2\kappa^2A^{(3)}(x)B'(y)B''(y) 
		- 2\kappa^2A'(x)B''(y)B^{(3)}(y)
		.
		\]
		By calculations, we obtain
		\[
		\frac{A^{(3)}(x)}{A'(x)} + 2\kappa^2A'(x)^2
		= \delta^2\frac{B^{(3)}(y)}{B'(y)} + 2\delta^2\kappa^2B'(y)^2 + 8a = c_1 
		\]
		So we have for $A(x)$
		\[
		\frac{A'''(x)}{A'(x)} + 
		2\kappa^2 A'(x)^2 = c_1
		\]
		\[
		\Rightarrow
		A''(x)^{2} = -\kappa^2A'(x)^{4} 
		+ c_1A'(x)^{2} + c_2
		\]
		and we have for $B(y)$
		\[
		\frac{B'''(y)}{B'(y)}
		+ 2\kappa^2B'(y)^2 
		= \delta^2
		\left(
		c_1 - 8a
		\right)
		\]
		\[
		\Rightarrow
		B''(y)^{2} = 
		- \kappa^2B'(y)^{4} + 
		\delta^2
		\left(
		c_1 - 8a
		\right)
		B'(y)^{2} + c_3.
		\]
		Using (\ref{A''}) we obtain the relationship 
		$
		16a^2 + \kappa^2(c_3 - c_2) =  4ac_1$.
	\end{proof}
	Let us illustrate an example of this family. 
	\begin{example}
		Let $a = -\delta^2\kappa^2$ and choose $c_1 = 2\kappa^2$, $c_2 = - \kappa^2$ and $c_3 = - 8\kappa^2\delta^2 - 17\kappa^2$. To have for every choice of $\delta$ and $\epsilon$ solutions, let $A'(x)$ and $B'(y)$ be constant functions. To this end,
		\[
		A'(x) = 1 \quad B'(y) = \sqrt{4 + \kappa^2\delta^2}.
		\]
		Then the solution turns into
		\[
		\sinh\left(\kappa w(x,y) \right)
		= \tan\left(\kappa \left(x + \sqrt{4 + \delta^2}y \right) \right).
		\]
		Now, let us demonstrate some examples by selecting $\delta$ and $\kappa$. For $\kappa = \delta =  1$ then we obtain a solution of the hyperbolic sinh-Gordon equation
		\[
		\sinh(w(x,y)) = 
		\tan
		\left(
		x + \sqrt{5}y
		\right).
		\]
		We demonstrate its graph in Figure 3.
		\begin{figure}\caption{}
			\centering
			\includegraphics[scale = 0.6]{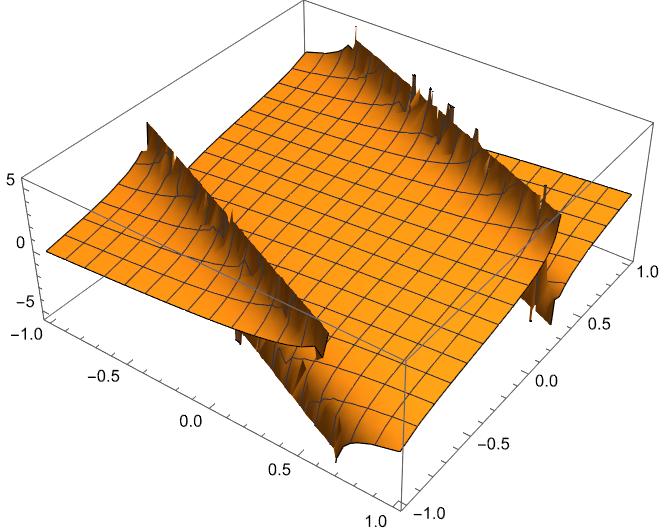}
		\end{figure}
		For $\kappa = i$ and $\delta = 1$, we obtain a solution for the sine-Gordon equation
		\[
		w(x,y) = \arcsin\left(\tanh\left(
		x + \sqrt{5}y
		\right)\right).
		\]
		We demonstrate its graph in Figure 4.
		\begin{figure}\caption{}
			\centering
			\includegraphics[scale = 0.6]{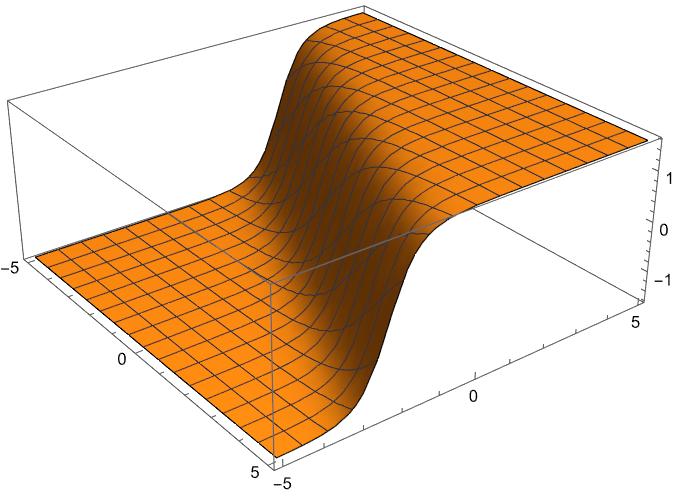}
		\end{figure}
	\end{example}

    	\section{Discussion}
	In this work, we classify all the hyperbolic/elliptic PDEs that admit functional separation solutions. Moreover, we study the hyperbolic/elliptic sine-Gordon and sinh-Gordon equations without any distinction between them. We have used the method of functional separation to obtain new families of solutions for equation (\ref{GShGordon}) and we prove that these are the solutions with such a property, so we obtain a classification. A future work to consider is to study this equation with a variable coefficient, \cite{YangZhong, Yang}, the inverse scattering method, \cite{AblowitzSegur} and the Fokas' method \cite{FokPell, Hwang} for boundary problems of both the hyperbolic and elliptic sine-Gordon and sinh-Gordon in a unified way.
	
	\textbf{Acknowledgements.} The author would like to thank Prof. C. Daskaloyannis and Prof. A. Fotiadis for their help and support and Dr. G. Papamikos for his useful comments.

	%%%%%%%%%%% Affiliation

	\vspace{10pt}
	\address{
		\noindent\textsc{Giannis Polychrou:}
		\href{mailto:ipolychr@math.auth.gr}
		{ipolychr@math.auth.gr}\\
		Department of Mathematics, Aristotle University of Thessaloniki,
		Thessaloniki 54124, Greece}
	
\end{document}